\newcommand{\beq}[1]{\begin{equation}\label{#1}}
\newcommand{\eeq}{\end{equation}}
\newtheorem{theorem}{Theorem}
\theoremstyle{definition}
\title{The descriptive look at the size of subsets of groups}
\author{ Taras Banakh (Lviv University),}
\author{  Igor Protasov, Ksenia Protasova (Kyiv University)}
\keywords{Borel complexity,  Stone-\v Cech compactification.}
\begin{document}

UDC 517.5

\begin{abstract}
We explore the Borel complexity of some basic families of subsets of a countable group (large, small, thin, sparse and other) defined by the size of their elements. Applying the obtained results to the Stone-\v Cech compactification $\beta G$ of $G$, we prove, in particular, that the closure of the minimal ideal of $\beta G$ is of type $F_{\sigma\delta}$.
\end{abstract}
\maketitle

Given a group $G$, we denote by ${\bf P}_{G}$ and ${\bf F}_{G}$ the Boolean algebra of all subsets of $G$ and its ideal of all finite subsets.  We endow ${\bf P}_{G}$ with the topology arising from identification (via characteristic functions) of ${\bf P}_{G}$ with $\{0,1\}^{G}$. For $K\in F_{G}$ the sets   $$ \{X\in {\bf P}_{G}: K\subseteq X\},  \  \  \{X\in {\bf P}_{G}: X\cap K=\emptyset\}$$  form the sub-base of this topology.

After the topologization, each family $\mathcal{F}$ of subsets of a group  $G$ can be considered as a subspace of  ${\bf P}_{G}$,  so one can ask about Borel complexity of $\mathcal{F}$, the question typical in the {\it Descriptive Set Theory} (see \cite{b1}). We ask these questions for the most intensively studied  families in  {\it Combinatorics of Groups.}  For the origins  of the families defined in section 1, see the survey \cite{b2}. The main results are in section 2 and its applications to $\beta G$, the Stone-\v Cech compactification  of a discrete group  $G$,  in section 3. We conclude the paper with some comments.

\section{Diversity of subsets of a group}

A subset $A$ of a group $G$ is called

\begin{itemize}
\item {\em large} \ if $G = FA $ for some  $F\in {\bf F}_{G}$;
\item {\em extralarge} \ if $A\cap L$ is large for each large subset $L$;
\item {\em small} \ if $L  \setminus A$ is large for each large subset $L$;
\item {\em thick} \ if,  for any  $F\in  {\bf F}_{G}$ there exist  $g\in G$  such that $Fg\subseteq A$;
\item {\em prethick} \ if $FA$ is thick for some  $F\in  {\bf F}_{G}$.
\end{itemize}

Some evident or easily verified (see \cite{b3}) relationships: $A$ is large if and only if  $G\setminus A$ is not thick, $A$ is small if and only if $A$ is not prethick if and only if  $G\setminus A$ is extralarge. The family of all small subsets of $G$ is an ideal in ${\bf P}_{G}$.

A subset $A$ of a group $G$ is called
\begin{itemize}
\item {\em  $P$-small\/}  if there exists  an injective sequence   $(g_{n})_{n\in\omega}$  in $G$  such  that the subsets $\{g_{n} A : n\in\omega \}$ are pairwise disjoint;
\item {\em weakly $P$-small\/}  if, for any $n\in\omega$,  there exists $g_{0},\ldots , g_{n}$ such that the subsets $g_{0}A,\ldots , g_{n}A$ are pairwise disjoint;
\item {\em almost $P$-small\/}  if there exists  an injective sequence  $(g_{n})_{n\in\omega} $ in $G$  such that  $g_{n}A\cap g_{m}A$   is finite for all distinct $n,m$;
\item {\em near $P$-small\/}  if,  for every $n\in\omega$, there exists $g_{0},\ldots , g_{n}$ such that $g_{i}A\cap g_{j}A$ is finite for all distinct  $i,j\in \{0,\ldots,n\}$.
\end{itemize}

Every infinite group $G$  contains a weakly  $P$-small set, which is not $P$-small, see   \cite{b4}. Each almost $P$-small subset  can be partitioned into two  $P$-small  subsets \cite{b5}. Every countable Abelian group contains a near  $P$-small subset which is neither weakly nor almost $P$-small \cite{b6}.

A subset $A$  of a group $G$  with the identity $e$  is called
\begin{itemize}
\item {\em  thin} \ if $gA\cap A$ is finite for  each $g\in G\setminus \{e\}$;
\item {\em sparse} if, for every infinite  subset $Y$ of $G$, there exists a non-empty finite subset $F\subset Y$ such that $\bigcap_{g\in F}gA$ is finite.
\end{itemize}

The union of two thin subsets need not to be thin, but the family of all sparse subsets is an ideal in $P _{G}$ \cite{b7}.  For plenty of modifications and generalizations of thin and sparse subsets see \cite{b5}, \cite{b8}, \cite{b9}, \cite{b10}, \cite{b11}.

\section{ Results}

For a group $G$, we denote by ${\bf L}_{G}$, ${\bf EL}_{G}$, ${\bf S}_{G}$,  ${\bf T}_{G}$, ${\bf PT}_{G}$  the sets of all large, extralarge, small, thick and prethick subsets of $G$, respectively.


\begin{theorem} For a countable group $G$, we have: ${\bf L}_{G}$ is  $F_{\sigma}$, ${\bf T}_{G}$ is $G_{\delta}$, ${\bf PT}_{G}$  is $G_{\delta\sigma}$, ${\bf S}_{G}$ and ${\bf EL}_{G}$  are $F_{\sigma\delta}$.
\end{theorem}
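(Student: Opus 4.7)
The plan is to work with the explicit subbasis of the product topology on ${\bf P}_G = \{0,1\}^G$: for each $g \in G$ the event $E_g = \{A \in {\bf P}_G : g \in A\}$ is clopen, and every basic open set is a finite Boolean combination of such $E_g$. The observation that drives the whole argument is that for each $F \in {\bf F}_G$ the left-multiplication map $A \mapsto FA$ on ${\bf P}_G$ is continuous: the value $\chi_{FA}(g) = \bigvee_{f \in F}\chi_A(f^{-1}g)$ depends on only finitely many coordinates of $\chi_A$. This will reduce the prethick case to the thick case.

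First I would handle ${\bf L}_G$ and ${\bf T}_G$ directly. For a fixed $F \in {\bf F}_G$, the condition $G = FA$ says that every $g \in G$ lies in some $fA$, so
$$\{A \in {\bf P}_G : G = FA\} = \bigcap_{g \in G}\bigcup_{f \in F} E_{f^{-1}g}$$
is a countable intersection of clopens, hence closed; taking the union over the countably many $F \in {\bf F}_G$ exhibits ${\bf L}_G$ as $F_\sigma$. Dually,
$${\bf T}_G = \bigcap_{F \in {\bf F}_G}\bigcup_{g \in G}\bigcap_{f \in F} E_{fg},$$
where the innermost finite intersection is clopen, so ${\bf T}_G$ is $G_\delta$.

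For ${\bf PT}_G$, by definition $A$ is prethick iff $FA \in {\bf T}_G$ for some $F \in {\bf F}_G$. Continuity of $A\mapsto FA$ together with ${\bf T}_G \in G_\delta$ makes each $\{A : FA \in {\bf T}_G\}$ a $G_\delta$, and then ${\bf PT}_G$ is a countable union of $G_\delta$'s, i.e.\ $G_{\delta\sigma}$. Finally I invoke the relations recalled in Section~1: smallness is the negation of prethickness, so ${\bf S}_G = {\bf P}_G\setminus {\bf PT}_G$ is $F_{\sigma\delta}$; and $A$ is extralarge iff $G\setminus A$ is small, which transfers the class $F_{\sigma\delta}$ across the involutive self-homeomorphism $A \mapsto G\setminus A$ to give ${\bf EL}_G \in F_{\sigma\delta}$.

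No step is genuinely hard. The main thing to watch is the quantifier structure in the prethick case: unfolding ``$FA$ is thick'' produces a four-level nested expression over $K$, $g$, $k$, $f$, and the cleanest way to keep the complexity at $G_{\delta\sigma}$ is to absorb the two inner (finite) quantifiers by appealing to the continuity of $A \mapsto FA$ rather than expanding the definition by hand.
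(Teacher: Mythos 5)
Your proposal is correct and follows essentially the same route as the paper: both unfold thickness into $\bigcap_{F}\bigcup_{g}(\mbox{clopen})$, reduce prethickness to thickness through the operation $A\mapsto FA$ (your continuity observation is precisely the paper's auxiliary claim that $\{A\in{\bf P}_G: H\subseteq FA\}$ is open, just packaged more cleanly), and transfer the conclusions to ${\bf S}_G$ and ${\bf EL}_G$ by complementation. The only cosmetic difference is that you exhibit ${\bf L}_G$ directly as a countable union of closed sets, whereas the paper derives it from ${\bf T}_G$ via the self-homeomorphism $A\mapsto G\setminus A$.
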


\begin{proof} We take $F, H \in {\bf F}_{G}$ and prove the following auxiliary claim: the set  $T(F,H)=\{A\in {\bf P}_{G}: H\subseteq FA\}$ is open.

Indeed, let $F=\{g_{1}, \ldots ,g_{n}\}$ and  $(H_{1}, \ldots ,H_{n})$ is a partition of $H$. Then the set $\{A\in {\bf P}_{G}: H_{1}\subseteq g_{1}A, \ldots ,H_{n}\subseteq g_{n}A\}$ is open. It follows that $T(F, H)$ is open.

Now, the set  $T_{H} (F)=\bigcup \{T(F, Hg): g\in G\}$ is open and the set  $T(F)=\bigcap \{T_{H}(F): H\in {\bf F}_{G}\}$  is $G_{\delta}$. We note that  ${\bf T}_{G}=T(\{e\})$ and   ${\bf PT}_{G}=\bigcup \{T(F): F\in \mathbf{F}_{G} \}$ so ${\bf T}_{G}$  is $G_{\delta}$  and ${\bf PT}_{G}$ is $G_{\delta\sigma}$.

Since  ${\bf S}_{G}={\bf P}_{G}\setminus {\bf PT}_{G}$,   ${\bf S}_{G}$  is $F_{\delta\sigma}$. The mapping defined by  $A\longmapsto G\setminus A$ is a homeomorphism of   ${\bf P}_{G}$,  so ${\bf L}_G$ is homeomorphic to  ${\bf P}_{G}\setminus \mathbf{T}_{G} $ and  ${\bf EL}_{G}$ is homeomorphic to $ {\bf S}_{G}$. Hence, $ {\bf L}_{G}$ is  $F_{\sigma}$ and  ${\bf EL}_{G}$ is $F_{\delta\sigma}$.
\end{proof}

\begin{theorem} For a countable group $G$,  the sets of thin, weakly $P$-small  and near $P$-small subsets of $G$ are $F_{\delta\sigma}$.
\end{theorem}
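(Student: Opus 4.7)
The plan is to isolate one combinatorial building block and then assemble each of the three families from it. The auxiliary observation I would establish first is that for every $g,h\in G$ and every $K\in{\bf F}_G$ the set
\[
C(g,h,K):=\{A\in{\bf P}_G: gA\cap hA\subseteq K\}
\]
is closed in ${\bf P}_G$. Indeed, for every $x\in G\setminus K$ the condition ``$x\notin gA\cap hA$'' is equivalent to ``$g^{-1}x\notin A$ or $h^{-1}x\notin A$'', which defines a clopen subset of ${\bf P}_G=\{0,1\}^G$; intersecting these countably many clopen sets over $x\in G\setminus K$ recovers $C(g,h,K)$ and shows it is closed. Consequently, the set
\[
\Phi(g,h):=\{A\in{\bf P}_G: gA\cap hA\text{ is finite}\}=\bigcup_{K\in{\bf F}_G}C(g,h,K)
\]
is $F_\sigma$.

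With this in hand, the three cases are short. The thin subsets form $\bigcap_{g\in G\setminus\{e\}}\Phi(g,e)$, a countable intersection of $F_\sigma$ sets, hence $F_{\sigma\delta}$. For the weakly $P$-small case, I would note that for any tuple $(g_0,\dots,g_n)\in G^{n+1}$ the condition ``$g_0A,\dots,g_nA$ are pairwise disjoint'' is $\bigcap_{i\neq j}C(g_i,g_j,\emptyset)$ and so closed; taking the countable union over all such tuples yields an $F_\sigma$ set $W_n$, and the weakly $P$-small subsets then form $\bigcap_{n\in\omega}W_n$, of type $F_{\sigma\delta}$. The near $P$-small case runs along the same lines with ``$\emptyset$'' replaced by ``finite'': for each tuple the set $\bigcap_{i\neq j}\Phi(g_i,g_j)$ is $F_\sigma$ (finite intersection of $F_\sigma$), the countable union over tuples in $G^{n+1}$ is $F_\sigma$, and the intersection over $n\in\omega$ gives $F_{\sigma\delta}$.

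The one place I expect to need care is the auxiliary claim: $C(g,h,K)$ must be exhibited as an intersection of \emph{clopen} sets (not merely open ones), so that the infinite intersection stays closed. After that, the argument is pure Borel bookkeeping, leaning throughout on the countability of $G$ (and hence of ${\bf F}_G$ and of $G^{n+1}$) and on the stability of the $F_\sigma$ class under finite intersections and countable unions.
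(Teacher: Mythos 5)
Your proof is correct and follows essentially the same route as the paper: exhibit the relevant conditions as closed sets (your $C(g,h,K)$ plays the role of the paper's $X(F,g)$ and $Y(F,H)$), take countable unions over finite parameters to get $F_\sigma$ sets, and intersect over $g\in G\setminus\{e\}$ or $n\in\omega$ to land in $F_{\sigma\delta}$. The only cosmetic difference is in the near $P$-small case, where you allow a separate finite exceptional set for each pair $(g_i,g_j)$ while the paper uses a single $H$ for the whole tuple; these are equivalent since a finite union of finite sets is finite.
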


\begin{proof}
Given  $F\in  {\bf F}_{G}$  and $g\in G\setminus\{e\}$,   the set $X(F,g)=\{A\in  {\bf P}_{G}: ga \notin A $ for each $a\in A\setminus F\}$ is closed.
The set
$X(g)=\bigcup \{ X(F,g): F\in {\bf F}_{G} \}$ is $F_{\sigma}$,
and $\bigcap \{ X (g) : g\in G\setminus \{e\} \}$ is the set of all thin subsets.

For $n\in \omega$, $[G]^{n}$  denotes the family of all $n$-subsets of $G$. Given $F\in [G]^{n}$,   the set $Y(F)=\{A\in  {\bf P}_{G}: gA\bigcap hA=\emptyset$ for all distinct $g,h\in F\}$ is closed, and the set of all weakly $P$-small  subsets of $G$ coincides with $$\bigcap_{n\in\omega} \bigcup\{ Y(F): F\in [G]^{n}\}.$$

Given $F\in [G]^{n}$ and  $H\in {\bf F}_{G}$,   the set  $Y(F, H)=\{A\in  {\bf P}_{G}: g(A\backslash H)\bigcap h(A\backslash H)=\emptyset$  for all distinct $g,h\in F\}$  is closed, and the set of all near $P$-small  subsets of $G$ coincides with $$\bigcap_{n\in\omega} \bigcup\{ Y(F,H): F\in [G]^{n}, \  H\in {\bf F}_{G}\}.$$
\end{proof}

We recall that a topological space $X$ is {\it Polish} if $X$ is homeomorphic to a separable complete metric space.
A subset $A$ of a   Polish space $X$ is {\it analytic} if $A$ is a continuous image of some Polish space, and $A$ is {\it coanalytic} if $X\setminus A$ is analytic.

Using the classical tree technique \cite{b1} adopted to groups in \cite{b10}, we get.\vspace{3 mm}

\begin{theorem} For a countable group $G$, the ideal of sparse subsets  is coanalytic and the set of $P$-small  subsets is analytic in   ${\bf P}_{G}$.
\end{theorem}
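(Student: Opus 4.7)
The plan is to realize each of the two families as a projection of a Borel set in a suitable Polish product, which immediately delivers analyticity; coanalyticity of the sparse subsets then follows by taking complements. Since $G$ is countable and carries the discrete topology, the space $G^{\w}$ of sequences is Polish, and so is ${\bf P}_G\times G^{\w}$. For $P$-small sets the natural parameter is an injective sequence $(g_n)\in G^{\w}$ witnessing pairwise disjointness of the translates $g_nA$. For sparse sets it is more convenient to work with the complement: $A$ fails to be sparse iff there exists an infinite $Y\subseteq G$, parametrized by an injective sequence $(y_n)\in G^{\w}$, such that for every non-empty finite $I\subset\w$ the intersection $\bigcap_{i\in I} y_iA$ is infinite.

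For $P$-small I would first verify that for each fixed pair $n\neq m$ the condition $g_nA\cap g_mA=\emptyset$ cuts out a closed subset of ${\bf P}_G\times G^{\w}$. Its negation asserts the existence of $a\in G$ with $a\in A$ and $g_m^{-1}g_na\in A$; for every concrete choice of $a$ and of the values of $g_n$ and $g_m$, this fixes only finitely many coordinates in each factor, hence is a basic clopen set, so the negation is open. Combining these closed conditions (over countably many pairs) with the $G_\delta$ set of injective sequences yields a Borel set $X\subseteq {\bf P}_G\times G^{\w}$ whose projection to ${\bf P}_G$ is exactly the set of $P$-small subsets, which is therefore analytic.

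For sparse I would check that for every non-empty finite $I\subset\w$ and every $k\in\w$ the set
$\{(A,(y_n))\in{\bf P}_G\times G^{\w} : |\bigcap_{i\in I} y_iA|\ge k\}$
is open in the product, because witnessing $k$ distinct elements in the intersection fixes only finitely many values $y_i$ for $i\in I$ and finitely many coordinates of $A$, both of which are clopen specifications. Intersecting first over $k$ gives a $G_\delta$ set $Y_I$, and then intersecting over the countably many non-empty finite $I\subset\w$ and imposing the $G_\delta$ injectivity condition on $(y_n)$ produces a Borel subset of ${\bf P}_G\times G^{\w}$. Its projection to ${\bf P}_G$ is the set of non-sparse subsets, so the sparse ideal is coanalytic.

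I do not expect a serious conceptual obstacle: this is precisely the projection (``tree'') technique alluded to in the paragraph preceding the theorem. The only delicate point is careful bookkeeping of two layers of quantifiers, one over a sequence in $G$ and one over finite index sets in $\w$, while confirming that the fiberwise open/closed estimates remain valid when the group elements themselves are treated as variables; this works because $G$ is discrete and every partial specification of the sequence is clopen in $G^{\w}$.
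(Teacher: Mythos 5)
Your argument is correct and is essentially the approach the paper intends: the paper itself gives no details, merely invoking ``the classical tree technique'' of \cite{b1} as adapted in \cite{b10}, and representing a set as the projection of a Borel subset of ${\bf P}_G\times G^{\omega}$ is exactly what that technique amounts to for establishing analyticity. Your verification that the relevant conditions are closed (pairwise disjointness of translates) or $G_\delta$ (all finite subintersections infinite, injectivity of the witnessing sequence) in the product is sound, so your write-up in fact supplies the details the paper leaves to a citation.
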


\section{ Applications to $\beta G$}\vspace{3 mm}

Given a discrete group $G$, we identify the Stone-\v Cech compactification  $\beta G$ with the set of all ultrafilters on $G$ and consider  $\beta G$ as a right-topological semigroup (see  \cite{b12}). Each non-empty  closed subspace $X$ of $\beta G$ is determined by some filter $\varphi_{X}$ on $G$: $$X=\bigcap\{\overline{\Phi} : \Phi\in\varphi\},  \  \   \overline{\Phi}=\{p\in\beta G: \Phi\in p \}. $$

On the other hand, each filter $\Phi$ on $G$ is a subspace of  ${\bf P}_{G}$, so we can ask about complexity of $X$ as the complexity   of $\varphi_{X}$ in ${\bf P}_{G}$.

The semigroup $\beta G$  has the minimal ideal $K_{G}$ which play one of the key parts in combinatorial  applications of   $\beta G$. By \cite{b3} Theorem 1.5, the closure $cl (K_{G})$ is determined by the filter of all extralarge  subsets of $G$.  If $G$ is countable, applying Theorem 1, we conclude that  $cl (K_{G})$  has the Borel complexity  $F_{\sigma\delta}$.

An ultrafilter $p$ on $G$ is called {\it strongly  prime} if $p\notin  cl(G^{\ast} G^{\ast})$, where   $G^{\ast}$ is a semigroup of all free ultrafilters on $G$.  We put  $X=  cl(G^{\ast} G^{\ast})$  and choose the filter  $\varphi_{X}$ which determine $X$. By \cite{b7}, $A\in \varphi_{X}$ if and only if $G\backslash A$ is sparse. If $G$  is countable,  applying Theorem 3, we conclude  that $\varphi_{X}$  is coanalitic in  ${\bf P}_{G}$.

Let  $(g_{n})_{n\in\omega}$ be an injective sequence in $G$. The set $$\{g_{i_{1}} g_{i_{2}}\ldots  g_{i_{n}}: 0 \leq i_{1}< i_{2}< \ldots < i_{n}<\omega \}$$
is called an  {\it FP-set}. By the Hindman Theorem 5.8 \cite{b12}, for every finite partition of $G$, at least one cell of the partition contains an $FP$-set. We denote
by   ${\bf FP}_{G}$ the family of all subsets of $G$ containing some $FP$-set. A subset $A$ of $G$  belongs to  ${\bf FP}_{G}$ if  and only if $A$ is an element of some idempotent of  $\beta G$. By analogy with Theorem 3, we can prove that  ${\bf FP}_{G}$ is analytic in ${\bf P}_{G}$.

\section{ Comments and open questions}

1.	Answering a question from \cite{b13}, Zakrzewski proved \cite{b14} that, for a countable amenable group $G$,  the ideal of absolute null subsets has the Borel complexity  $F_{\sigma\delta}$. Each absolute null subset is small but, for every  $\epsilon >0$, there exists a small subset $A$ of $G$ such that    $\mu(A)>1-\epsilon$ for some Banach measure $\mu$ on $G$ (see \cite{b5}).
\smallskip

2.	The classification of subsets of a group by their size can be considered in much more general context of {\it Asymptology} (see \cite{b15}).  In this context, large, thick and small subsets play the parts of dense, open and nowhere  dense subsets of a uniform topological space.
For dynamical look at the subsets of a group see \cite{b16}.
\smallskip

3. The following type of subsets of a group arised in {Asymptology} \cite{b11}. A subset $A$ of a group $G$ is called {\it scattered} if $A$ has no subsets coarsely equivalent to the Cantor macrocube. Each sparse subset is scattered,
each scattered subset is small and the set of all scattered subsets of $G$ is an ideal in ${\bf P}_{G}$.

By  Theorem 1 \cite{b11}, a subset $A$ of a group $G$ is scattered if and only if $A$ contains no piecewise shifted $FP$-sets.

Let $(g_{n})_{n\in\omega}$ be an injective sequence in $G$  and let  $(b_{n})_{n\in\omega}$ is a sequence in $G$.

The set   $$\{g_{i_{1}} g_{i_{2}}\ldots  g_{i_{n}}  b_{i_{n}} : 0 \leq i_{1}< i_{2}< \ldots < i_{n}<\omega \} $$ is called a {\it piecewise  shifted $FP$-set}.

Using this combinatorial characterization and the tree technique from \cite{b10}, we can prove that the  ideal of scattered subsets of a countable group $G$ is coanalytic in ${\bf P}_{G}$.
\smallskip

4.	By \cite{b17}, every meager topological group $G$ can be represented as the product $G=CN$ of some countable subset $C$ and nowhere dense subset $N$.
Every infinite group $G$ can be represented as a union of some countable family of small subsets \cite{b3}.
\smallskip

{\it Can every infinite group $G$ be represented as the product $G=CS$ of some countable subset $C$ and small subset $S$?}

The answer is positive if either $G$  is amenable or $G$  has a subgroup of countable index.
\vspace{5 mm}

\vspace{5 mm}
CONTACT INFORMATION

\vspace{5 mm}

T.~Banakh:\\
Faculty of Mechanics and Mathematics\\
Ivan Franko National University of Lviv\\
Universytetska 1, 79000, Lviv, Ukraine\\
t.o.banakh@gmail.com
\medskip

I.~Protasov: \\
Department of Cybernetics  \\
         National Taras Shevchenko  University of Kiev \\
         Academic Glushkov St.4d  \\
         03680 Kiev, Ukraine \\ i.v.protasov@gmail.com

\medskip

K.~Protasova:\\
Department of Cybernetics  \\
         National Taras Shevchenko  University of Kiev \\
         Academic Glushkov St.4d  \\
         03680 Kiev, Ukraine \\ ksuha@freenet.com.ua


\begin{thebibliography}{17}

\bibitem{b1}	{Kechris A.} {\em Classical Descriptive Set Theory}, Springer, 1995.

\bibitem{b2}  {Protasov I.V. } {\em Selective survey on Subset Combinatorics of Groups},  Ukr. Math. Bull. {\bf 7} (2011), 220--257.

\bibitem{b3}  {Protasov I., Banakh T. }  {\em Ball Structures and
Colorings of Graphs and Groups}, Math. Stud. Monogr. Ser, {\bf 11} -- Lviv: VNTL Publisher, 2003.

\bibitem{b4}  {Banakh T., Lyaskovska  N.} {\em Weakly $P$-small not $P$-small subsets in groups} // Intern. J. Algebra Comput.  {\bf 18} (2008) 1--6.

\bibitem{b5} {Lutsenko Ie., Protasov I. }  {\em Sparse, thin and other subsets of groups} // Intern. J. Algebra Comp. {\bf 19} (2009) 491--510.

\bibitem{b6}  {Protasov I.V., Protasova K.D.} {\em Around $P$-small subsets of groups} // Carpath. Math. Publ. {\bf 6} (2014) 337--341.

\bibitem{b7} {Filali M., Lutsenko Ie., Protasov I.} {\em Boolean group ideals and the ideal structure of $\beta G$} // Math. Stud. {\bf 30} (2008) 1--10.

\bibitem{b8} {Lutsenko Ie.,  Protasov I.}, {\em Relatively thin and sparse subsets of groups} // Ukr. Math. J. {\bf 63}:2 (2011) 216--225.

\bibitem{b9} {Protasov I., Slobodianiuk S.}  {\em Thin subsets of groups} // Ukr. Math. J. --  {\bf 65}:9 (2013) 1245--1253.

\bibitem{b10} {Banakh T., Lyaskovska  N.} {\em On thin-complete ideals of subsets of groups} // Ukr. Math. J.  {\bf 63}:6 (2011)P 741--754.

\bibitem{b11}  {Banakh T., Protasov I., Slobodianiuk S.}  {\em Scattered subsets of groups} // Ukr. Math. J. {\bf 65}:3 (2015) 291--298.

\bibitem{b12}  {Hindman N., Strauss D.} {\em Algebra in the Stone-\v Cech compactification: theory and applications}. -- Berlin, New York: Walter de Gruyter, 1998.

\bibitem{b13} {Banakh T., Lyaskovska  N.} {\em Completeness of translation-invariant ideals on  groups} // Ukr. Math. J. {\bf 62}:8 (2010) 1022--1031.

\bibitem{b14}  {Zakrzewski P.} {\em On the complexity  of the  ideal of absolute null sets} // Ukr. Math. J. {\bf 64}:2  (2012) 306--308.

\bibitem{b15} {Protasov I., Zarichnyi M.} {\em General Asymptology} // Math Stud. Monogr. Ser., {\bf  12.} - Lviv:  VNTL Publ., 2007.

\bibitem{b16} {Protasov I., Slobodianiuk S.}  {\em The dynamical look at the subsets of a group} // Appl. Gen. Topol.  {\bf 16}:2 (2015) 217--224.

\bibitem{b17} {Banakh T., Guran I., Ravsky O.} {\em Characterizing meager paratopological groups} // Appl. Gen. Topol. {\bf 12}:1 (2011) 27--33.

\end{thebibliography}
\end{document}